\title{Planar tropical gravitational descendants with one tangency condition}
\author{Falko Gauss}
\date{\today}
\theoremstyle{plain}
\newtheorem{theorem}{Theorem}[section]
\newtheorem{lemma}[theorem]{Lemma}
\newtheorem{corollary}[theorem]{Corollary}
\theoremstyle{definition}
\newtheorem{definition}[theorem]{Definition}
\newtheorem{assumption}[theorem]{Assumption}
\theoremstyle{remark}
\newtheorem{remark}[theorem]{Remark}
\newcommand{\tref}[1]{Theorem~\textup{\ref{#1}}}
\newcommand{\cref}[1]{Corollary~\textup{\ref{#1}}}
\newcommand{\lref}[1]{Lemma~\textup{\ref{#1}}}
\newcommand{\fref}[1]{Figure~\textup{\ref{#1}}}
\begin{document}

\maketitle

\smallskip
\noindent \textbf{Abstract:}
We prove a formula which allows us to recursively compute planar tropical gravitational descendants which involve psi-classes of arbitrary power at marked ends fixed by points and additionally a psi-class of power one at exactly one marked end fixed by a line. Up to now almost exclusively tropical gravitational descendants with psi-classes at marked ends restricted by points had been considered.

\smallskip
\noindent \textbf{Keywords:} Tropical geometry, Gromov-Witten invariants, tropical intersection theory

\smallskip
\noindent \textbf{2000 Mathematics Subject Classification:} 14N35, 52B20

\section{Introduction}
\label{introduction}

Gromov-Witten theory is an important tool to tackle enumerative problems in algebraic geometry. The main objects of Gromov-Witten theory are the so-called Gromov-Witten invariants. They count (virtually) the number of curves of given genus and degree that pass through a certain number of chosen subvarieties.
To also count curves that fulfill some tangency conditions, geometers started to use gravitational descendants which are generalized versions of Gromov-Witten invariants. Note that some authors also call these invariants descendant Gromov-Witten invariants (e.g. \cite{markrau}).\\
With the emergence of tropical geometry also tropical counterparts of Gromov-Witten invariants and gravitational descendants were introduced. Unfortunately, the tropical gravitational descendants do not coincide with the corresponding classical numbers in general.
This difference has structural reasons. The moduli space of marked tropical curves in $\mathbb{R}^{2}$ is non-compact and, hence, does not parameterize curves with components that approach the "boundary" of $\mathbb{R}^{2}$ in form of a limiting process.
In particular, this fact makes the computation, which relies classically on the recursive structure of the boundary of the respective moduli space, much harder in the tropical setting.
Nevertheless, some results on the computation of certain tropical gravitational descendants have been achieved.
Most notably, Johannes Rau and Hannah Markwig proved a tropical analogue to the WDVV Equation for planar invariants where psi-classes appear only at marked ends restricted by codimension two objects (see \cite{markrau}). Here we want to go one step further and prove a Topological Recursion Relation which allows us to compute invariants which can additionally have a psi-class of power one at exactly one marked end restricted by a codimension one object.
In order to do so we have to recall some notions from \cite{gathkerbmark} and \cite{raunew}. This will be done in Section \ref{definitions} of the article at hand. Lastly, in Section \ref{mainsection} we will prove the desired formula using methods from \cite{markrau} and \cite{raunew}.\\ For the sake of completeness, one should mention that there is very recent work of Peter Overholser which deals also with the computation of tropical descendant invariants in the plane (see \cite{overholser}). But his point of view is more mirror symmetric and less intersection theoretic.\\
I would like to thank Andreas Gathmann for several helpful discussions. Note that this article contains material from my master thesis.\\

\section{Defining tropical gravitational descendants}
\label{definitions}

We consider the \emph{moduli space $\mathcal{M}^{lab}_{0,l+m+n,trop}\left(\mathbb{R}^{2},d\right)$ of rational parameterized tropical curves} in $\mathbb{R}^{2}$ of degree $\Delta$ with $l+m+n+\left|\Delta\right|$ labelled ends of which $l+m+n$ are contracted (see \cite[Definition 4.1]{gathkerbmark} for details).
In analogy to the moduli space of pointed stable curves the contracted ends are called \emph{marked ends}. In particular, we assume here that the degree is of the form $\Delta=\{\underbrace{\left(-e_{1},-e_{2},e_{1}+e_{2}\right),\ldots,\left(-e_{1},-e_{2},e_{1}+e_{2}\right)}_{d \times}\}$, where $e_{1}$ and $e_{2}$ denote the unit vectors in the plane $\mathbb{R}^{2}$. By abuse of notation we will not distinguish between the labels of non-contracted ends and the corresponding elements in $\Delta$. Furthermore, we will refer to $-e_{1}$, $-e_{2}$ and $e_{1}+e_{2}$ as the \emph{standard directions}.\\
Exemplarily, an element $\left(\Gamma,x_{1},\ldots,x_{5},h\right)$ from $\mathcal{M}^{lab}_{0,5,trop}\left(\mathbb{R}^{2},2\right)$, i.e. a \emph{marked tropical curve}, is shown in \fref{markedcurve}.

\begin{figure}[h]
  \centering
	\def\svgwidth{260pt}
  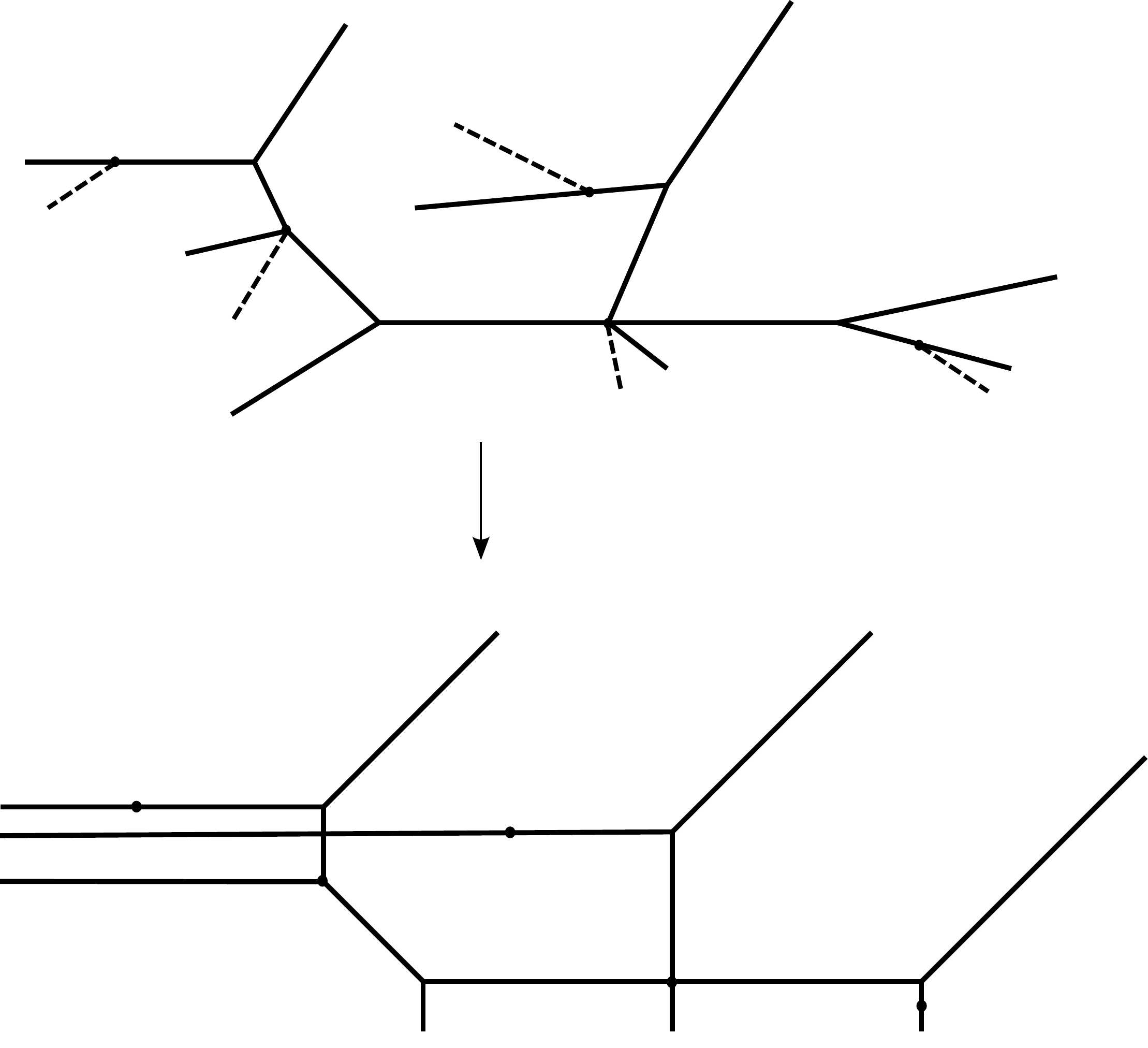
  \caption{5-marked tropical curve}
  \label{markedcurve}
\end{figure}

According to what one sees $\Gamma$ is an \emph{abstract tropical curve}, i.e. a tree with a distinguished number of marked ends whose bounded edges are equipped with strictly positive lengths. The marked ends of the abstract tropical curve are visualized as dashed lines. The map $h:\Gamma \rightarrow \mathbb{R}^{2}$ maps $\Gamma$ to an image as on the bottom of \fref{markedcurve}. The image satisfies the balancing condition and marked ends are contracted to a point.\\

Now, we want to count marked tropical curves that fulfill certain incidence and tangency restrictions. 
Since we are working in the plane, we only have to consider incidence restrictions coming from codimension one objects, i.e. lines, and codimension two objects, i.e. points. The codimension zero object, i.e. the whole space $\mathbb{R}^{2}$, does not give us non-trivial restrictions. So let $G_{j}$ be certain (tropical) lines and $P_{k}$ be certain points in $\mathbb{R}^{2}$. For us a tropical line is a 3-valent graph with one vertex and one end in each of the standard directions (cf. the left-most curve in \cite[Definition 2.7]{markrau}). We will call the unique vertex of the tropical line its \emph{root}.\\
We can partition the set of marked ends of a curve into three disjoint sets. In the following we will consider curves with $l+m+n$ marked ends and a partition $L\cup M \cup N=\left\{1,\ldots,l+m+n\right\}$ with $\left|L\right|=l$, $\left|M\right|=m$ and $\left|N\right|=n$. We assume that the ends with labels $i \in L$ are unrestricted, the ends with labels $j \in M$ have to meet the respective lines $G_{j}$ and the ends with labels $k \in N$ have to meet the respective points $P_{k}$. To make the meaning of the word "meet" precise, we need the following definition.

\begin{definition}
\label{Definition:evaluation}
For $i=1,\ldots,l+m+n$ the map $$ev_{i}:\mathcal{M}^{lab}_{0,l+m+n,trop}\left(\mathbb{R}^{2},d\right)\rightarrow \mathbb{R}^{2},\left(\Gamma,x_{1},\ldots,x_{l+m+n},h\right) \mapsto h\left(x_{i}\right)$$ is called the \emph{$i$-th evaluation map}.
\end{definition}

By \cite[Proposition 4.7]{gathkerbmark} the moduli space of rational parameterized tropical curves can be equipped with a tropical (=weighted, balanced and polyhedral) fan structure via choosing one of the marked ends as the \emph{anchor vertex}.
So the evaluation maps become morphisms of fans. Moreover, in this sense we can talk about \emph{cones}, \emph{facets}, \emph{(relative) interiors} and \emph{rational functions} (cf. \cite[Section 2]{markrau}). Lastly, this enables us to apply the tropical intersection theory developed in \cite{allerau} and to form \emph{intersection products} consisting of rational functions on the fan
$\mathcal{M}^{lab}_{0,l+m+n,trop}\left(\mathbb{R}^{2},d\right)$. These intersection products have the structure of tropical complexes and, hence, we can talk about \emph{polyhedra} and the \emph{dimension} of an intersection product.

Now we want to enrich these intersection products further by allowing \emph{psi-classes} $\psi_{i}$ at the marked ends with indices $i \in L\cup M \cup N$ (see \cite[Definition 2.2]{markrau} for the definition of a psi-class in tropical geometry). They can be thought of as encoding tangency (or higher order contacts) of the curves at the respective marked ends. In formal terms a psi-class $\psi_{i}$ is the codimension one subfan of $\mathcal{M}^{lab}_{0,l+m+n,trop}\left(\mathbb{R}^{2},d\right)$ that consists of cones corresponding to trees where the marked end $i$ sits at a vertex of valence $\geq 4$. 
Let $t_{i}$, $s_{j}$ and $r_{k}$ be the numbers that specify how many psi-classes at $i \in L$, $j \in M$ and $k \in N$, respectively, are required.\\

\begin{definition}
[cf. {\cite[Proposition and Definition 2.10]{markrau}}]
\label{defgeneral}
Let $d,l,m,n,t_{i},s_{j}$ and $r_{k}$ be non-negative integers which satisfy:
\begin{align}
\label{dimensionequ}
l+m+n+3d-3+2=m+2n+\sum_{i \in L} t_{i}+\sum_{j \in M} s_{j}+\sum_{k\in N} r_{k}.
\end{align}
Then the respective \emph{tropical gravitational descendant} is defined as:
$$\left\langle \prod_{i\in L} \tau_{t_{i}}\left(0\right)\prod_{j \in M}\tau_{s_{j}}\left(1\right)\prod_{k \in N}\tau_{r_{k}}\left(2\right)\right\rangle_{d}:=$$
$$\frac{1}{\left(d!\right)^{3}}\deg\left(\prod_{i\in L} \psi^{t_{i}}_{i}\prod_{j\in M} ev^{*}_{j}\left(G_{j}\right)\psi^{s_{j}}_{j}\prod_{k\in N} ev^{*}_{k}\left(P_{k}\right)\psi^{r_{k}}_{k}\cdot\mathcal{M}^{lab}_{0,l+m+n,trop}\left(\mathbb{R}^{2},d\right)\right).$$
\end{definition}

Note that the dimension of $\mathcal{M}^{lab}_{0,l+m+n,trop}\left(\mathbb{R}^{2},d\right)$ is $l+m+n+3d-3+2$ by \cite[Proposition 4.7]{gathkerbmark} and the codimension of the intersection of psi-classes is $\sum_{i \in L} t_{i}+\sum_{j \in M} s_{j}+\sum_{k\in N} r_{k}$ where the pullback of a line has codimension 1 and the pullback of a point codimension 2.
So the lengthy Equation \ref{dimensionequ} enforces the intersection product to be zero-dimensional. This assumption is necessary to talk about the \emph{degree} of an intersection product.
The definition above is independent of the choice of the lines $G_{j}$ and the points $P_{k}$ (cf. Section 2 of \cite{markrau}).
In particular, the definition also makes sense if we choose rather special configurations of lines and points.
But to give some enumerative meaning to the definition we have to assume a certain generality of the lines $G_{j}$ and the points $P_{k}$.
Namely, we assume that the lines $G_{j}$ and the points $P_{k}$ are in \emph{general position} in the sense of \cite[Definition 3.2]{gathkerbmark}. 
It is worth mentioning that the assumption that $G_{j}$ and $P_{k}$ are in general position is not a serious limitation because the $P_{k}$ and $G_{j}$ in general position form a dense subset in the set of $P_{k}$ and $G_{j}$ under all conditions (which can be identified with some large-dimensional real space). A proof of this fact is presented in \cite[Lemma 3.4]{markrau}.\\
Assuming that the lines $G_{j}$ and points $P_{k}$ are in general position, the intersection product consists (in set-theoretic terms) of the tropical curves that pass through $G_{j}$ and $P_{k}$ equipped with certain weights. The degree of the intersection product, to wit the tropical gravitational descendant, is then the sum of all weights. According to \cite[Lemma 2.4]{franz} (or \cite[Remark 2.2.14]{rau}) these weights can be computed as the product of the weight of the facet the respective curve lies in and some determinantal expression.
Nevertheless, the computation of the weights via determinantal formulas is very unwieldy and unsuitable for a large degree or a large number of marked ends.
That is why one wish for recursion formulas, like for instance the Topological Recursion Relation in classical Gromov-Witten theory. Such formulas rely classically on the well-known Splitting Lemma (e.g. see \cite[5.2.1 Lemma]{kock}).
Johannes Rau developed a tropical version of the Splitting Lemma.
However, his Tropical Splitting Lemma \cite[Theorem 4.13]{raunew} only works under rather strict assumptions.
In fact, this is one of the major problems when transfering the classical Gromov-Witten theory to the tropical setting. An example where the Tropical Splitting Lemma fails can be found in \cite[Remark 4.18]{raunew}. Here we restrict ourselves to a special situation.

\begin{assumption}
\label{notation}
From now on (unless stated otherwise) we assume that $L=\emptyset$, $M=\{1\}$, $\{2,3\} \subseteq N$ and $s_{1}=1$.
\end{assumption}

So we have a fan $X:= \psi^{0}_{1}\prod_{k\in N} \psi^{r_{k}}_{k} \cdot \mathcal{M}^{lab}_{0,1+n,trop}\left(\mathbb{R}^{2},d\right)$ and a one-dimensional intersection product $F:=\left( \tau_{0}\left(1\right)\prod_{k\in N}\tau_{r_{k}}\left(2\right)\right)$.
This situation has two major advantages. 
Firstly, the weight $w_{X}\left(\sigma\right)$ of the facet $\sigma$ the respective curve lies in is one by \cite[Theorem 5.3]{markrau} and, therefore, can be omitted in this article.
Secondly, we can apply the Tropical Splitting Lemma by \cite[Lemma 4.16]{raunew}. The Tropical Splitting Lemma splits the curves of an intersection product at their \textit{contracted bounded edges} as described in \cite[Remark 2.7]{gathmark}. Note that a contracted bounded edge induces a \emph{reducible partition} $I|J$ of the labels of the marked ends of the curve, i.e. a partition into two non-empty sets $I$ and $J$ such that directions in $I\cap \Delta$ (and hence also in $J\cap \Delta$) sum up to the zero vector. In our situation the Tropical Splitting Lemma \cite[Theorem 4.13]{raunew} takes the form of:
\begin{align}
\label{eqhardterms2}
&\notag\left\langle\tau_{1}\left(1\right) \prod_{k\in N}\tau_{r_{k}}\left(2\right)\right\rangle_{d}=\\
&\notag\sum_{\substack{I|J \textrm{ reducible }\\1 \in I, 2,3 \in J}} \sum_{\varepsilon+\zeta=2}\left\langle \tau_{0}\left(\varepsilon\right)\tau_{0}\left(1\right)\prod_{k\in N \cap I}\tau_{r_{k}}\left(2\right)\right\rangle_{d_{1}}\cdot\left\langle \tau_{0}\left(\zeta\right)\prod_{k\in N\cap J}\tau_{r_{k}}\left(2\right)\right\rangle_{d_{2}}\\
&+  \left\langle \phi_{1|2,3} \tau_{0}\left(1\right) \prod_{k\in N}\tau_{r_{k}}\left(2\right)\right\rangle_{d}.
\end{align}

In Equation \eqref{eqhardterms2} the rational function $\phi_{1|2,3}$ is defined as specified in \cite[Section 5.2]{raunew}. Here, it is sufficient for us to know that $\phi_{1|2,3}$ is of the following form (see \cite[Lemma 5.6]{raunew}) on each facet $\sigma$ of $F$, i.e. on each top-dimensional polyhedron of $F$:
\begin{align}
\label{phi}
\phi_{1|2,3}|_{\sigma}=\begin{cases} \textrm{constant},&  \textrm{\it{if the interior curves of $\sigma$ have}}\\
&\textrm{\it{a contracted bounded edge}}\\ 
\textrm{sum of lengths of edges that}\\
\textrm{separate $1$ from $2,3$},&  \textrm{\it{otherwise.}} \end{cases}
\end{align}

Loosely speaking $\phi_{1|2,3}$ decides if the result applying the Tropical Splitting Lemma equals the result one obtains by applying the classical Splitting Lemma to the respective classical invariants. This is the case in almost all of the existing literature (e.g. in \cite{markrau} and \cite{raunew}), because there $\phi_{1|2,3}$ is bounded.
Here $\phi_{1|2,3}$ shows a slightly different behaviour. This will be made precise in the next corollary.

\begin{definition}
\label{movablestring}
Let $C$ be a curve in $\mathcal{M}^{lab}_{0,1+n,trop}\left(\mathbb{R}^{2},d\right)$. A subgraph $\mathcal{S}$ of $C$ that is homeomorphic to $\mathbb{R}$ and does not involve (the closure of) a marked end $x_{k}$ with $k \in N$ is called a \emph{string}.
\end{definition}

\begin{corollary}
\label{lemmanewdefor}
Let $F$ be as above.
Then, the function $\phi_{1|2,3}$ can be unbounded only on facets of $F$ with interior curves that look like in \fref{specialexample} (where $E$ and $G_{1}$ can run along any of the three standard directions).
\begin{figure}[h]
  \centering
		\def\svgwidth{280pt}
  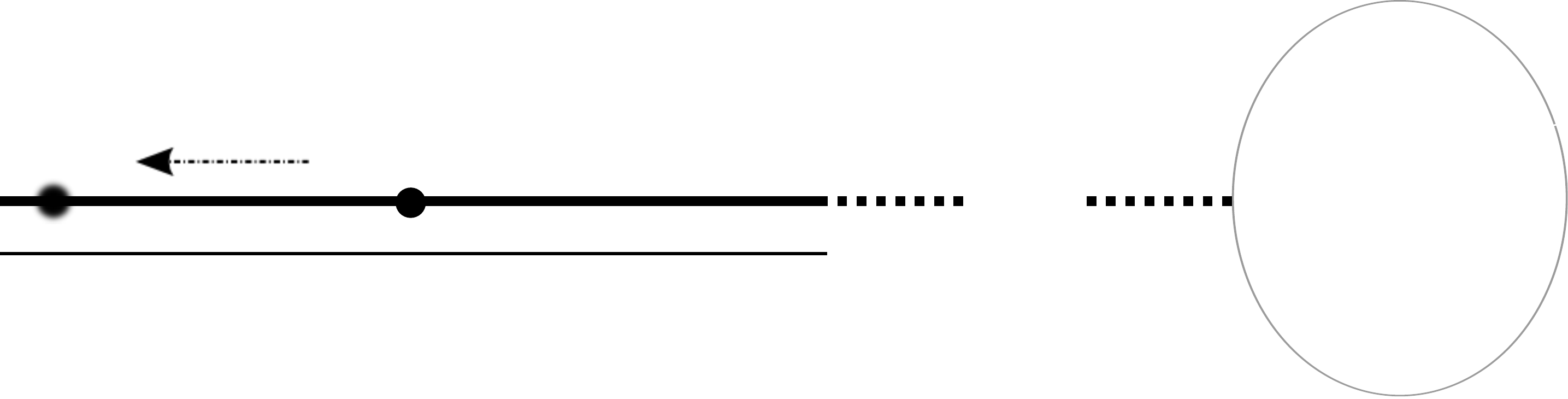
  \caption{Deformation which yields an unbounded $\phi_{1|2,3}$}
  \label{specialexample}
\end{figure}
\end{corollary}

\begin{proof}
Every facet of $F$ is described by the deformation of one of its interior curves, because facets are one-dimensional.
A list of all possible deformations is presented in \cite[Lemma 5.13]{raunew}. In our situation ($L=\emptyset$) the list looks as follows.
The deformation of an interior curve $C$ in a facet $\sigma$ of $F$ is described by one of the following cases:
\begin{enumerate}
\item $C$ contains a contracted bounded edge and one can vary its length
arbitrarily.
\item $C$ does not have a contracted bounded edge; and
\begin{enumerate}
\item $C$ has a \emph{degenerated} vertex $V$ of valence three, i.e. a vertex of one of the following types:
\begin{enumerate}
\item The vertex is incident to a bounded edge $E$, a non-contracted end and the marked end with label $1$ such that the curve $C$ and the line
$G_{1}$ do not intersect transversally at $ev_{1}\left(C\right)$.
\item The vertex is incident to a bounded edge $E$ and two non-contracted ends such that the span of the non-contracted ends near
$V$ is still only one-dimensional.
\end{enumerate}
The deformation is given by changing the length of the bounded edge $E$.

\item $C$ contains a string $\mathcal{S}$ with two non-contracted ends and only one adjacent
bounded edge $E$ as shown in \fref{specialexample2} such that all vertices of $\mathcal{S}$ are non-degenerated and of valence three in $C$. This string can be moved by changing the length of $E$.
\end{enumerate}
\end{enumerate}

Now we will work through this list and stick to the notations from there.\\
Assume that $\sigma$ is a facet of $F$ which has an interior curve with a contracted bounded edge, i.e. $\sigma$ looks like 1. on the list. Then $\phi_{1|2,3}$ is constant on $\sigma$ by \eqref{phi} and, thus, bounded on $\sigma$.\\
Now, assume that $\sigma$ is a facet with an interior curve as in 2. (a) on the list.
As $\phi_{1|2,3}$ measures the sum of the lengths of the edges that separate $1$ from $2,3$, the function $\phi_{1|2,3}$ can only be unbounded on $\sigma$ if it takes the length of the growing edge $E$ into account. Note that, in the case 2. (a) ii. $E$ does not separate $1\in M$ from $2, 3$ and, hence, does not contribute to $\phi_{1|2,3}$ on $\sigma$. This holds, because the vertex $V$ in 2. (a) ii. is of valence three and, hence, cannot also be incident to the marked end with label $1$. In the case 2. (a) i. the only chance that $E$ is taken into account by $\phi_{1|2,3}$ is the situation shown in \fref{specialexample}.\\
Finally, assume that $\sigma$ is a facet as in 2. (b) on the list. In this case an interior curve of $\sigma$ looks like in \fref{specialexample2}. Namely, $E$ only contributes to $\phi_{1|2,3}$ on 
$\sigma$, if $E$ separates $1$ from $2,3$. So by construction $x_{1}$ has to lie on $\mathcal{S}$ (cf. \cite[Lemma 2.4.14]{rau}) and $x_{2}, x_{3}$ have to lie "behind" $E$. Moreover, on such a curve $E$ must have one of the standard directions (i.e. directions of the edges of $G_{1}$), because otherwise the edge $E$ will not grow infinitely for obvious reasons.
\begin{figure}[h]
  \centering
	 \def\svgwidth{250pt}
  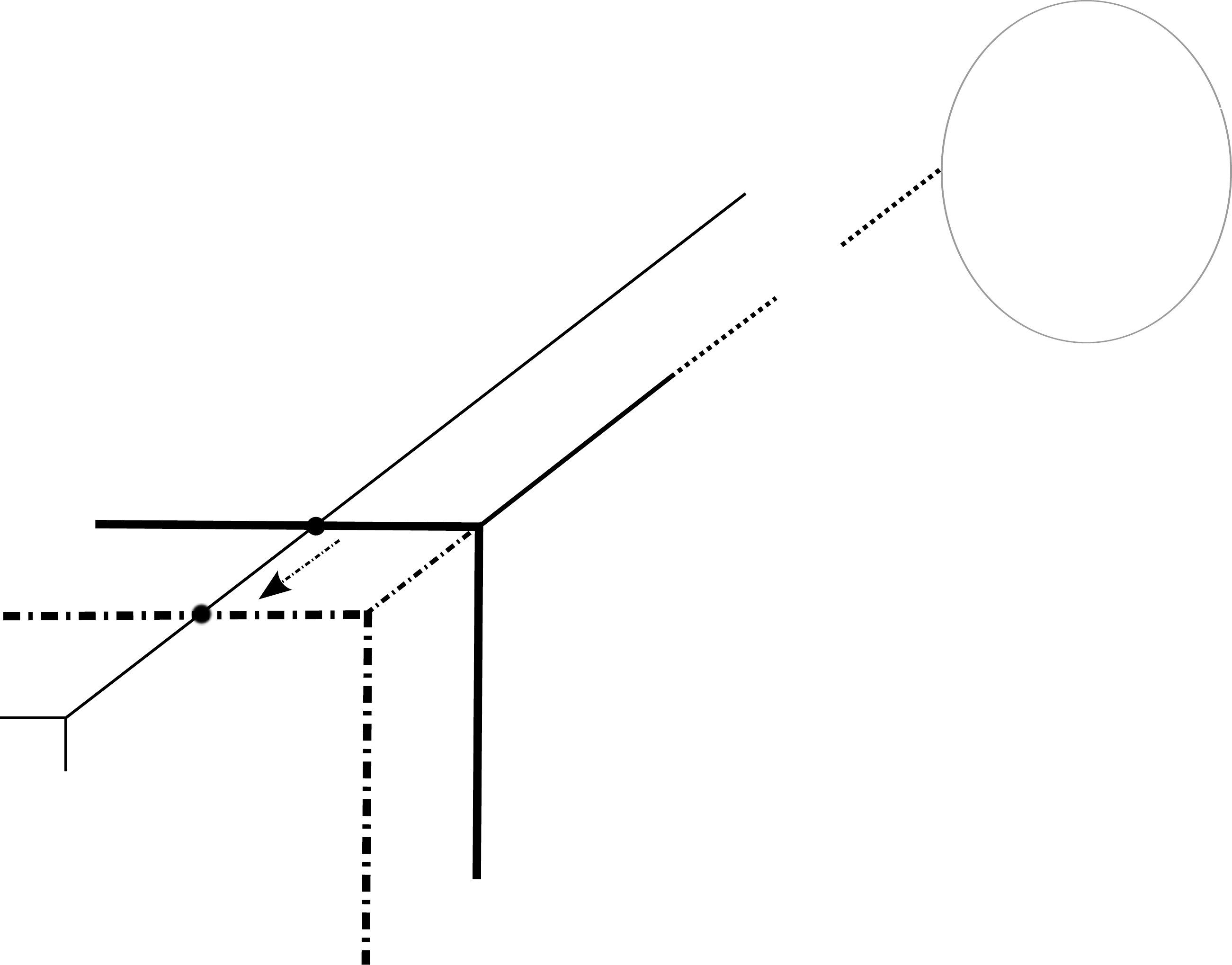
  \caption{Bounded deformation of a string}
  \label{specialexample2}
\end{figure}
But the line $G_{1}$ was chosen to look like the left-most curve in \cite[Definition 2.7]{markrau}. Hence, it has a root and none of the half-rays can be infinitely long in both directions. In \fref{specialexample2} this means that the line $G_{1}$ on the left hand side has to split somewhere, i.e. the root has to appear. So $E$ cannot grow infinitely and the situation in \fref{specialexample2} cannot cause an unbounded $\phi_{1|2,3}$ either.\\
All in all, the only interior curves which can cause facets with unbounded $\phi_{1|2,3}$ are the ones in \fref{specialexample}.
\end{proof}

\section{A new Topological Recursion Relation}
\label{mainsection}

A very helpful technique to compute certain intersection products, resp. their degrees, is (what we will call) the \emph{$\lambda$-method}.
The $\lambda$-method is due to Andreas Gathmann and Hannah Markwig (see \cite{gathmark}).
To apply the $\lambda$-method one adds a factor $ft^{*}\left(\lambda\right)$ to a one-dimensional intersection product of interest. This factor is the pull-back of an element $\lambda \in \mathcal{M}_{0,4,trop}$ under the forgetful map $ft:\mathcal{M}^{lab}_{0,l+m+n,trop}\left(\mathbb{R}^{2},d\right)\rightarrow\mathcal{M}_{0,4,trop}$ which forgets all ends except those with the labels $1,2,3,4$ of a given curve. Here $\mathcal{M}_{0,4,trop}$ is the \emph{moduli space of 4-marked abstract tropical curves} as defined in \cite[Definition 3.2]{gathkerbmark}. This moduli space consists of four different types of curves. All different types are shown in \cite[Example 1.1]{gathmark}. In the following we omit the case that $\lambda$ is just the single vertex in $\mathcal{M}_{0,4,trop}$, i.e. looks like the right-most curve in \cite[Example 1.1]{gathmark}.
Then, by abuse of notation, $\lambda$ denotes not only an element in $\mathcal{M}_{0,4,trop}$ but also a real number. This real number is the length of the unique bounded edge (cf. \cite[Example 1.1]{gathmark}). Along these lines, the forgetful map $ft$ can be considered as a map to the set of real numbers.\\
Curves in an intersection product that involves $ft^{*}\left(\lambda\right)$ fulfill the additional requirement that they map to $\lambda$ under $ft$.
Now, the key idea of the $\lambda$-method is to make $\lambda$ sufficiently large. Then, the curves in the intersection product have certain special properties, but the degree of the intersection product remains the same by the so-called rational equivalence (cf. the proof of \cite[Theorem 8.1]{markrau}).\\

The overall goal of this section is to recursively compute invariants of the form $\left\langle\tau_{0}\left(0\right)^{l'}\tau_{1}\left(0\right)^{l''} \tau_{0}\left(1\right)^{m-1}\tau_{1}\left(1\right)\prod_{k\in N}\tau_{r_{k}}\left(2\right)\right\rangle_{d}$ where $l',l'' \in \mathbb{N}$ and $m\in \mathbb{N}_{>0}$.
We will see that in order to do so, it is sufficient to derive a recursion formula for invariants of the form $\left\langle \tau_{1}\left(1\right)\prod_{k\in N}\tau_{r_{k}}\left(2\right)\right\rangle$, i.e. invariants as they appeared in Section \ref{definitions}. As we learned in the last section the curves shown in \fref{specialexample} are of particular importance.
The next lemma is the essential step to describe these curves in the realm of the $\lambda$-method.  

\begin{lemma}
\label{corollaryextrapoint}
Let $F$ be as in the previous section and $\sigma$ be a facet of $F$. Moreover, let $C$ be an interior curve of $\sigma$ as shown in \fref{specialexample}, such that $x_{4}$ with $4\in \Delta$ is the unbounded edge adjacent to the growing bounded edge $E$ and $C$ maps to $\lambda$ under $ft$. Then the linear parts $\left(\phi_{1|2,3}\right)_{\sigma}$ and $\left(ft\right)_{\sigma}$ of the restricted functions $\phi_{1|2,3}|_{\sigma}$ and $ft|_{\sigma}$ coincide.
\end{lemma}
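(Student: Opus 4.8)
The plan is to read off the relevant combinatorial data from \fref{specialexample} and to compute the slopes of both restricted functions with respect to the single deformation parameter of the one-dimensional facet $\sigma$. By case 2.~(a)~i.~in the proof of \cref{lemmanewdefor}, the interior curve $C$ has a trivalent vertex $V$ incident to the marked end $x_{1}$, to the non-contracted end $x_{4}$ (which, by the non-transversality of $C$ and $G_{1}$ at $ev_{1}(C)$, runs in a standard direction), and to the bounded edge $E$; the edge $E$ joins $V$ to the remaining part of $C$, in which the marked ends $x_{2}$ and $x_{3}$ are located. The deformation sweeping out $\sigma$ increases the length $\ell_{E}$ of $E$ while all other edge lengths of $C$ remain fixed. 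Hence it suffices to show that $\phi_{1|2,3}|_{\sigma}$ and $ft|_{\sigma}$ have the same derivative with respect to $\ell_{E}$.

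First I would treat $\phi_{1|2,3}|_{\sigma}$. Since the interior curves of $\sigma$ have no contracted bounded edge, the second case of \eqref{phi} applies, so $\phi_{1|2,3}|_{\sigma}$ is the sum of the lengths of those edges of $C$ that separate $1$ from $2,3$. As $V$ carries $x_{1}$ while $x_{2},x_{3}$ sit beyond $E$, the edge $E$ separates $1$ from $2,3$ and contributes $\ell_{E}$, whereas every other such edge keeps its constant length along $\sigma$. Therefore $\phi_{1|2,3}|_{\sigma}=\ell_{E}+\mathrm{const}$, and the linear part $(\phi_{1|2,3})_{\sigma}$ is exactly the deformation parameter $\ell_{E}$.

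Next I would analyse $ft|_{\sigma}$. Forgetting all ends except $1,2,3,4$ and straightening the resulting two-valent vertices produces the minimal subtree of $C$ spanning $x_{1},x_{2},x_{3},x_{4}$. Because $x_{1}$ and $x_{4}$ are attached at $V$ whereas $x_{2},x_{3}$ lie on the far side of $E$, this subtree is of the combinatorial type $\{1,4\}\,|\,\{2,3\}$, and its unique bounded edge has length equal to the length of the path in $C$ from $V$ to the vertex at which the routes to $x_{2}$ and $x_{3}$ branch. This path runs through $E$, and along $\sigma$ only the portion contributed by $E$ varies; hence, under the identification of the corresponding ray of $\mathcal{M}_{0,4,trop}$ with $\mathbb{R}$, we get $ft|_{\sigma}=\ell_{E}+\mathrm{const}$, so $(ft)_{\sigma}$ is again $\ell_{E}$. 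Comparing the two computations yields $(\phi_{1|2,3})_{\sigma}=(ft)_{\sigma}$, as claimed.

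The step I expect to be the main obstacle is the identification of the combinatorial type of $ft(C)$ together with the verification that $E$ enters the bounded-edge length with coefficient exactly one. The decisive point is that the label $4$ is retained by $ft$ and sits at $V$ adjacent to $E$: this is what prevents $E$ from being pruned or absorbed when the other ends are forgotten, and what forces the separating pair $\{1,4\}\,|\,\{2,3\}$ rather than a type in which $E$ would be contracted away. Once this is established, the matching of slopes is immediate, since along $\sigma$ only $\ell_{E}$ changes and $E$ lies simultaneously on the $\phi_{1|2,3}$-separating locus and on the path realizing the bounded edge of $ft(C)$.
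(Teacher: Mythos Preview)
Your argument is correct and follows essentially the same route as the paper: parameterize the one-dimensional facet $\sigma$ by the length $\ell_{E}$ of the growing edge, then compute both $\phi_{1|2,3}|_{\sigma}$ and $ft|_{\sigma}$ as affine functions of $\ell_{E}$ with slope one. Your treatment of $ft$ is in fact slightly more careful than the paper's, which simply asserts $ft(\widetilde{C})=\text{length of }\widetilde{E}$; you correctly allow for an additive constant coming from the edges between the far endpoint of $E$ and the branching vertex of $x_{2},x_{3}$, and you make explicit why the label $4\in\Delta$ being kept by $ft$ forces the combinatorial type $\{1,4\}\mid\{2,3\}$ and prevents $E$ from being pruned.
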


\begin{proof}
We know from \cite[Lemma 5.13]{raunew} that we get other curves of $\sigma$ by varying the length of the bounded edge $E$ of $C$. As $\sigma$ is one-dimensional all curves in $\sigma$ arise in that way. In particular, all interior curves of $\sigma$ do not have a contracted bounded edge and we can use the second description of $\phi_{1|2,3}$ from Equation \eqref{phi}. Let $\widetilde{C}$ be a curve in $\sigma$ and $\widetilde{E}$ be the deformed edge which originates from $E$, then we obtain:
\begin{align*}
\phi_{1|2,3}\left(\widetilde{C}\right)=&_{\eqref{phi}} \textrm{sum of lengths of edges that separate $1$ from $2,3$}\\
=&\textrm{length of $\widetilde{E}$} +\textrm{term that is constant on $\sigma$}, 
\end{align*}
On the other hand we obtain:
\begin{align*}
ft \left(\widetilde{C}\right)=& \textrm{length of $\widetilde{E}$}.
\end{align*}
Since $\widetilde{C}$ was an arbitrary curve in $\sigma$, we can conclude that $\phi_{1|2,3}|_{\sigma}+\textrm{constant}=ft|_{\sigma}$.
So the linear parts of $\phi_{1|2,3}|_{\sigma}$ and $ft|_{\sigma}$ coincide.
\end{proof}

Now we have to quantify the weight of curves as shown in \fref{specialexample}. As already mentioned such weights are given by determinantal expressions. 
A nice example of how to compute such an absolute determinant $\left|\det_{C}\left(\ldots\right)\right|$ for a curve $C$ is presented in \cite[Example 2.6]{franz}. However, here we use notions analogous to \cite[Notation 5.2]{markrau}. In particular, we consider evaluation maps 
\begin{align}
ev:X\rightarrow \mathbb{R}^{1} \times \left(\mathbb{R}^{2}\right)^{n}, C \mapsto \left(\left(ev_{1}\left(C\right)_{x}\right),\left(ev_{k}\left(C\right)\right)_{k \in N}\right),
\end{align} 
\begin{align}
\widetilde{ev}:X\rightarrow \left(\mathbb{R}^{2}\right)^{n+1}, C \mapsto \left(\left(ev_{1}\left(C\right)\right),\left(ev_{k}\left(C\right)\right)_{k \in N}\right),
\end{align} 
where the index $x$ at a point in $\mathbb{R}^{2}$ indicates its first coordinate. Later on we will also use the index $y$ to indicate a second coordinate.
So the evaluation map $\widetilde{ev}$ arises from $ev$ by replacing the line restriction at $x_{1}$ with a point restriction at $x_{1}$.

\begin{lemma}
\label{importantnewformula}
Let $Z=\left(ft^{*}\left(\lambda\right) \tau_{0}\left(1\right) \prod_{k\in N}\tau_{r_{k}}\left(2\right)\right)$ be a zero-dimensional intersection product and $X$be as above. Let $4 \in \Delta$. Moreover, let $C$ be an interior curve in a facet of $F$ as shown in \fref{specialexample}, such that $x_{4}$ is the unbounded edge adjacent to the growing bounded edge $E$ and $C$ maps to a large $\lambda$ under $ft$.
Then, the weight of $C$ in $Z$ is $\left|\det_{C}\left(ft \times ev\right)\right|=\left|\det_{C}\left( \widetilde{ev}\right)\right|$.
\end{lemma}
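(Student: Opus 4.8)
The plan is to establish the two asserted equalities separately: the weight of $C$ in $Z$ equals $\left|\det_{C}\left(ft\times ev\right)\right|$, and this in turn equals $\left|\det_{C}\left(\widetilde{ev}\right)\right|$. Write $\sigma_{X}$ for the maximal cone of $X$ in whose relative interior $C$ lies, so that $\det_{C}$ is taken with respect to the lattice $\Lambda_{\sigma_{X}}$ of rank $\dim X=2n+2$. Since $Z=ft^{*}\left(\lambda\right)\cdot ev^{*}_{1}\left(G_{1}\right)\cdot\prod_{k\in N}ev^{*}_{k}\left(P_{k}\right)\cdot X$, the general determinantal description of weights in a zero-dimensional intersection product (\cite[Lemma 2.4]{franz}, resp.\ \cite[Remark 2.2.14]{rau}) gives the weight of $C$ as the product of the facet weight $w_{X}\left(\sigma_{X}\right)$ and the absolute value of the determinant of the map cutting $Z$ out of $X$. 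This map records the forgetful length (for $ft^{*}(\lambda)$), the coordinate transverse to $G_{1}$ at $x_{1}$ (for $ev^{*}_{1}(G_{1})$) and the two coordinates of each $x_{k}$, $k\in N$ (for the points); in the notation of \cite[Notation 5.2]{markrau} this is precisely $ft\times ev$. As $w_{X}\left(\sigma_{X}\right)=1$ by \cite[Theorem 5.3]{markrau}, the weight of $C$ in $Z$ is $\left|\det_{C}\left(ft\times ev\right)\right|$.

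It remains to compare this with $\left|\det_{C}\left(\widetilde{ev}\right)\right|$, which is the heart of the statement. Both $ft\times ev$ and $\widetilde{ev}$ are integral linear maps on $\Lambda_{\sigma_{X}}$ into lattices of rank $2n+2$; they agree in the $2n$ point coordinates $\left(ev_{k}\right)_{k\in N}$ and differ only in that $ft\times ev$ records $ft$ together with the transverse line coordinate $a_{\perp}\circ ev_{1}$, whereas $\widetilde{ev}$ records both coordinates of $ev_{1}$. Here $a_{\perp}$ denotes a primitive integral functional on $\mathbb{R}^{2}$ that is constant along the ray of $G_{1}$ passing through $ev_{1}(C)$, and I complete it to a lattice basis $\left\{a_{\perp},a_{\parallel}\right\}$ of the dual of $\mathbb{Z}^{2}$, so that $\widetilde{ev}$ records, after a unimodular change of the target, the pair $a_{\parallel}\circ ev_{1}$ and $a_{\perp}\circ ev_{1}$. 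I would then factor off the shared block: setting $W:=\ker\left(\left(ev_{k}\right)_{k\in N}\right)$ in $\Lambda_{\sigma_{X}}\otimes\mathbb{R}$, its lattice $\Lambda_{W}:=\Lambda_{\sigma_{X}}\cap W$ is saturated of rank two, and a lattice basis of $\Lambda_{\sigma_{X}}$ adapted to $\Lambda_{W}$ puts both maps into block-triangular form with a common lower block $\left(ev_{k}\right)_{k\in N}$. Thus the two determinants differ only through their upper-left $2\times 2$ blocks, and it suffices to compare the determinants of $\left(ft,\,a_{\perp}\circ ev_{1}\right)$ and of $\left(a_{\parallel}\circ ev_{1},\,a_{\perp}\circ ev_{1}\right)$ on $\Lambda_{W}$.

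To evaluate these I would use the description of the facet from \cite[Lemma 5.13]{raunew}: the one-dimensional facet of $F$ through $C$ is obtained by growing the bounded edge $E$, and I let $v_{0}\in\Lambda_{W}$ be the primitive generator of this deformation (it lies in $W$ because growing $E$ leaves the points $P_{k}$ fixed). Growing $E$ by one lattice unit translates the contracted end $x_{1}$ along the direction $u$ of $E$, so $ev_{1}\left(v_{0}\right)=u$, while the forgetful length grows at unit rate, so $ft\left(v_{0}\right)=1$; the latter is exactly the content of the computation in \lref{corollaryextrapoint}. Since $E$ runs along a standard direction, $u$ is a primitive lattice vector, and as the ray of $G_{1}$ at $ev_{1}(C)$ is parallel to $u$ (this is the non-transversality depicted in \fref{specialexample}) we get $a_{\perp}\left(u\right)=0$ and $a_{\parallel}\left(u\right)=\pm 1$. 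Completing $v_{0}$ to a lattice basis $\left\{v_{0},v_{1}\right\}$ of $\Lambda_{W}$, the $v_{0}$-column of each of the two $2\times 2$ matrices equals $\left(\pm 1,0\right)^{t}$; hence both determinants equal $\pm\left(a_{\perp}\circ ev_{1}\right)\left(v_{1}\right)$ and in particular agree in absolute value. The vector $v_{1}$ cancels and need not be identified, which is what makes the comparison clean.

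The step I expect to be the main obstacle is the lattice bookkeeping around $v_{0}$: verifying that growing $E$ is a primitive lattice deformation of $\sigma_{X}$, that its forgetful slope is exactly $1$, and that it displaces $ev_{1}$ by exactly the primitive vector $u$. These unit values are precisely what force the two $2\times 2$ determinants to coincide, and they rely crucially on $E$ (hence the ray of $G_{1}$) running along a standard, i.e.\ primitive, direction; were $u$ non-primitive the factor $a_{\parallel}(u)$ would exceed $1$ in absolute value and the equality would break down. A minor additional point to justify is the clean block-triangular factorization, i.e.\ that $\Lambda_{W}$ is saturated in $\Lambda_{\sigma_{X}}$ so that an adapted lattice basis exists; this is routine, since $\Lambda_{W}$ is the kernel lattice of an integral map.
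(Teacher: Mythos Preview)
Your argument is correct and, at its core, coincides with the paper's: both proofs isolate the deformation that grows $E$ as the pivotal lattice direction, observe that along it $ft$ and the parallel coordinate of $ev_{1}$ both move at unit speed while the transverse coordinate of $ev_{1}$ is zero, and then use a block-determinant expansion to conclude. The organizational difference is that the paper fixes an anchor vertex $x_{a}\neq x_{1}$ and a specific standard direction for $E$, writes the matrix of $ft\times ev$ explicitly in the resulting length-and-position coordinates, and performs a $1\times 1$ block swap (delete the $ft$ row and the $l_{1}$ column, then reinsert a row for the missing coordinate of $ev_{1}$); you instead pass to the kernel lattice of the point evaluations, reduce to a $2\times 2$ comparison, and treat all three standard directions uniformly via the pair $a_{\perp},a_{\parallel}$. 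Your formulation is cleaner and makes the role of primitivity of $u$ (forced by balancing at the $3$-valent vertex carrying $x_{1}$ and $x_{4}$) more transparent; the paper's explicit tables have the virtue of leaving no doubt about the lattice normalizations you flag as the main obstacle. The two arguments are interchangeable.
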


\begin{proof}
Let $C$ be a curve as above.
Let $l_{1}$ be the length of its growing bounded edge $E$ (see \fref{specialexample}).
We want to compute the weight of $C$ in $Z$. In so doing, we choose a marked end $x_{a}\neq x_{1}$ as the anchor vertex. Such a marked end must exist, because the number of marked ends is greater than three by Assumption \ref{notation}.
The position of $x_{a}$ is determined by the coordinates $\left(h\left(x_{a}\right)\right)_{x}$ and $\left(h\left(x_{a}\right)\right)_{y}$.
Say $E$ runs along the standard direction $-e_{1}$, then the position of the marked end $x_{1}$ is given as
\begin{align}
\label{eq:coord1}
h\left(x_{1}\right)=h\left(x_{a}\right)- l_{1} \cdot e_{1}+\underbrace{\ldots}_{\substack{\textrm{expression in the}\\ \textrm{other bounded lengths}}}.
\end{align}
The descriptions of the positions of all other marked ends do not involve the length $l_{1}$, because we chose $x_{a}\neq x_{1}$.

Now, by \cite[Lemma 2.4]{franz} we know that the weight of $C$ is given as the absolute value of the determinant $\left|\det_{C}\left(ft \times ev\right)\right|$.
We call the underlying matrix of this absolute determinant $H$.
Let the line $G_{1}$ be given by $\max\{y,c_{1}\}$. Thus, the pull-back of the evaluation map with plugged in coordinates from \eqref{eq:coord1} is of the form $\max\{\left(h\left(x_{a}\right)\right)_{y}+\ldots-c_{1},0\}$.
We assume that the points $P_{k}$ are given by functions of the form $\max\{x,p^{k}_{1}\}$ and $\max\{y,p^{k}_{2}\}$. 
Then, the pull-backs of the evaluation maps at the points are of the form $\max\{\left(h\left(x_{k}\right)\right)_{x}-p^{k}_{1},0\}$ and $\max\{\left(h\left(x_{k}\right)\right)_{y}-p^{k}_{2},0\}$. 
So we can determine the entries of all rows of $H$ that correspond to pull-backs of evaluations at the line $G_{1}$ and the points in the columns corresponding to $l_{1}$, $\left(h\left(x_{a}\right)\right)_{x}$ and $\left(h\left(x_{a}\right)\right)_{y}$. The only missing row is the row corresponding to the pull-back of the $\mathcal{M}_{0,4,trop}$-coordinate $\lambda$ under $ft$.
But, since $\lambda$ is large and there is no contracted bounded edge in a curve in this facet, the length $l_{1}$ must contribute to the length of $\lambda$. This causes an entry one at the column corresponding to $l_{1}$. 
All in all, the entries in $H$ are summarized in the table below.

\begin{center}
\begin{tabular}{c|| c c c c}
&&&& other bounded \\
 & $\left(h\left(x_{a}\right)\right)_{x}$ & $\left(h\left(x_{a}\right)\right)_{y}$ & $l_{1}$ & lengths\\

 \hline \hline

evaluation at $G_{1}$ & 0 & 1 & 0 & *\\

evaluation at $\left(P_{k}\right)_{x}$ for $k \in N$& 1 & 0 & 0 & *\\

evaluation at $\left(P_{k}\right)_{y}$ for $k \in N$ & 0 & 1 & 0 & *\\
$ft$ & 0 & 0 & 1 & * \\
\end{tabular}
\end{center}
\vspace{0.4cm}

We can permute rows and columns of $H$, because we are only interested in the absolute value of its determinant.
So we permute the rows and columns of $H$ in such a way that its first row is the row corresponding to $ft$ and that this row starts with a one.
Then, we can apply the determinantal formula for block matrices on the upper left 1 by 1 matrix block. This means we forget the row corresponding to $ft$ and the column corresponding to $l_{1}$. Now, applying the determinantal formula for block matrices backwards we can reintroduce a column corresponding to $l_{1}$ and introduce a row corresponding to $\left(P_{1}\right)_{x}$ for a new point $P_{1}$ which lies on the end pointing towards $-e_{1}$ of the line $G_{1}$. We end up with the matrix entries shown in the table below.

\begin{center}
\begin{tabular}{c|| c c c c}
&&&&other bounded\\
 & $l_{1}$ & $\left(h\left(x_{a}\right)\right)_{x}$ & $\left(h\left(x_{a}\right)\right)_{y}$  &  lengths \\
 \hline \hline

evaluation at $\left(P_{1}\right)_{x}$ & -1 & 1 & 0 & *\\

evaluation at $G_{1}=\left(P_{1}\right)_{y}$ & 0 & 0 & 1 & *\\

evaluation at $\left(P_{k}\right)_{x}$ for $k \in N$ &0 & 1 & 0 & *\\

evaluation at $\left(P_{k}\right)_{y}$ for $k \in N$ &0 & 0 & 1 & *\\ 

\end{tabular}
\end{center}
\vspace{0.4cm}

But this is exactly the matrix of the curve $C$ for the evaluation map $\widetilde{ev}$ which arises from $ev$ by replacing the line restriction at $x_{1}$ with a point restriction given by $P_{1}$.
This yields $\left|\det_{C}\left(ft \times ev\right)\right|=\left|\det_{C}\left( \widetilde{ev}\right)\right|$ and, hence, proves the claim.
\end{proof}

Finally we have all tools available to prove the main result of this article.

\begin{theorem}
\label{majortheorem}
It holds:
\begin{align*}
&\left\langle \tau_{1}\left(1\right) \prod_{k\in N}\tau_{r_{k}}\left(2\right)\right\rangle_{d}=\\
&\sum\left\langle \tau_{0}\left(\varepsilon\right)\tau_{0}\left(1\right)\prod_{k\in N \cap I}\tau_{r_{k}}\left(2\right)\right\rangle_{d_{1}}\cdot\left\langle \tau_{0}\left(\zeta\right) \prod_{k\in N\cap J}\tau_{r_{k}}\left(2\right)\right\rangle_{d_{2}}\\
&+3 \left\langle \tau_{0}\left(2\right) \prod_{k\in N}\tau_{r_{k}}\left(2\right)\right\rangle_{d}
\end{align*}
where the sum ranges over all $\varepsilon+\zeta=2$ with $\varepsilon,\zeta \in \mathbb{Z}_{\geq 0}$ and $d_{1}+d_{2}=d$ with $d_{1},d_{2} \in \mathbb{Z}_{\geq 0}$ and reducible partitions $I|J$ with $1\in I$ and $2,3\in J$.
\end{theorem}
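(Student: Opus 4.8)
The plan is to start from the Tropical Splitting Lemma in the form \eqref{eqhardterms2}. Its first summand is already the splitting term appearing in the statement, so the whole theorem reduces to proving that the correction term satisfies $\langle \phi_{1|2,3}\tau_{0}(1)\prod_{k\in N}\tau_{r_{k}}(2)\rangle_{d}=3\langle\tau_{0}(2)\prod_{k\in N}\tau_{r_{k}}(2)\rangle_{d}$, that is, $\deg(\phi_{1|2,3}\cdot F)=3\langle\tau_{0}(2)\prod_{k\in N}\tau_{r_{k}}(2)\rangle_{d}$. Everything below is devoted to this single equality.

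First I would compute $\deg(\phi_{1|2,3}\cdot F)$ by a boundary argument. Since $F$ is one-dimensional and $\phi_{1|2,3}$ is linear on each of its facets, the orders contributed by the two endpoints of any bounded edge are opposite and cancel when one sums $\operatorname{ord}_{V}(\phi_{1|2,3})$ over all vertices $V$ of $F$. Hence $\deg(\phi_{1|2,3}\cdot F)$ equals the sum, over the unbounded rays $\rho$ of $F$, of the facet weight $w_{\rho}$ times the outgoing slope of $\phi_{1|2,3}$ along $\rho$. By \cref{lemmanewdefor} this slope can be nonzero only on rays whose interior curves are of the type in \fref{specialexample}, and by \eqref{phi} every other ray carries slope $0$. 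On a special ray \lref{corollaryextrapoint} identifies the linear part of $\phi_{1|2,3}$ with that of $ft$, whose outgoing slope is $1$; therefore $\deg(\phi_{1|2,3}\cdot F)=\sum_{\rho\ \text{special}}w_{\rho}$.

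Next I would evaluate the facet weights $w_{\rho}$ through the $\lambda$-method. Choosing the fourth end $x_{4}\in\Delta$ to be the non-contracted end adjacent to the growing edge $E$, the weight of the interior curve $C$ of $\rho$ in the zero-dimensional product $Z=(ft^{*}(\lambda)\tau_{0}(1)\prod_{k\in N}\tau_{r_{k}}(2))$ is $w_{\rho}$ times the lattice index with which $ft^{*}(\lambda)$ cuts $\rho$, and this index is $1$ because $ft$ has slope $1$ on $\rho$ by \lref{corollaryextrapoint}. On the other hand \lref{importantnewformula} computes this very weight as $|\det_{C}(\widetilde{ev})|$, the determinantal weight of $C$ once the line condition at $x_{1}$ is replaced by a point condition $P_{1}$ lying on an end of $G_{1}$. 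Hence $w_{\rho}=|\det_{C}(\widetilde{ev})|$ and $\deg(\phi_{1|2,3}\cdot F)=\sum_{\rho\ \text{special}}|\det_{C}(\widetilde{ev})|$.

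Finally I would sort the special rays by the standard direction $\delta$ of $E$ (equivalently of the end $x_{4}$), of which there are exactly three. For a fixed $\delta$ the replacement point $P_{1}$ may be placed far out on the corresponding end of $G_{1}$; the curves passing through such a $P_{1}$ together with the points $P_{k}$ are then precisely the interior curves of the special rays with $E$ in direction $\delta$, and $|\det_{C}(\widetilde{ev})|$ is exactly their weight in $\langle\tau_{0}(2)\prod_{k\in N}\tau_{r_{k}}(2)\rangle_{d}$. Since the gravitational descendant is independent of the position of $P_{1}$, each of the three directions contributes a full copy of $\langle\tau_{0}(2)\prod_{k\in N}\tau_{r_{k}}(2)\rangle_{d}$, producing the factor $3$ and finishing the proof. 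The main obstacle I expect lies in this last weight bookkeeping: one must confirm that adjoining the $ft$-condition cuts each special ray transversally with index $1$, that \lref{importantnewformula} genuinely applies for every admissible direction of $E$ and $G_{1}$, and that for large $\lambda$ no curves other than the special ones contribute to $Z$, so that the correspondence between special rays and point-constrained curves is a weight-preserving bijection for each of the three directions.
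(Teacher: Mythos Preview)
Your proposal is correct and follows essentially the same route as the paper: reduce via \eqref{eqhardterms2} to the correction term, use the divisor/boundary argument so that only unbounded special rays survive, invoke \cref{lemmanewdefor} and \lref{corollaryextrapoint} to replace $\phi_{1|2,3}$ by $ft$, apply \lref{importantnewformula} to convert to $|\det_{C}(\widetilde{ev})|$, and then split into the three standard directions to obtain the factor $3$. The one place where the paper is more explicit than your sketch is precisely the ``main obstacle'' you flag at the end: the bijection between special rays in a given direction and curves through a far-away point $P_{1}$ on the corresponding end of $G_{1}$ is established there by a concrete placement of the $P_{k}$ in a small box and $P_{1}$ at $x$-coordinate $-\lambda$, together with a bounding-box lemma (\cite[Lemma 2.2.1]{torchiani}) forcing all vertices into a strip, which rules out any non-special curves and shows that every curve through $P_{1}$ has the shape of \fref{specialexample}.
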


\begin{proof}
By \eqref{eqhardterms2} we can write $\left\langle \tau_{1}\left(1\right) \prod_{k\in N}\tau_{r_{k}}\left(2\right)\right\rangle_{d}$ as 
\begin{align*}
&\left\langle\tau_{1}\left(1\right) \prod_{k\in N}\tau_{r_{k}}\left(2\right)\right\rangle_{d}=\\
&\sum_{\substack{I|J \textrm{ reducible }\\ 1 \in I, 2,3 \in J}} \sum_{\varepsilon+\zeta=2}\left\langle \tau_{0}\left(\varepsilon\right)\tau_{0}\left(1\right)\prod_{k\in N \cap I}\tau_{r_{k}}\left(2\right)\right\rangle_{d_{1}}\cdot\left\langle \tau_{0}\left(\zeta\right)\prod_{k\in N\cap J}\tau_{r_{k}}\left(2\right)\right\rangle_{d_{2}}\\
&+  \left\langle \phi_{1|2,3} \tau_{0}\left(1\right) \prod_{k\in N}\tau_{r_{k}}\left(2\right)\right\rangle_{d}.
\end{align*}
It remains to quantify $\left\langle \phi_{1|2,3} \tau_{0}\left(1\right) \prod_{k\in N}\tau_{r_{k}}\left(2\right)\right\rangle$. For reasons of clarity, we denote the set of facets of $F$ as $F^{\left(1\right)}$ and the set of curves, to wit vertices, of $F$ as $F^{\left(0\right)}$. Then by the definition of tropical divisors \cite[Definition 3.4]{allerau} it holds
\begin{align}
\label{nullteform}
\left\langle \phi_{1|2,3}  \tau_{0}\left(1\right) \prod_{k\in N}\tau_{r_{k}}\left(2\right)\right\rangle_{d}=\sum_{F^{\left(0\right)} \ni C < \sigma \in F^{\left(1\right)}} w_{F}\left(\sigma\right) \left(\phi_{1|2,3}\right)_{\sigma}\left(u_{\sigma/C}\right),
\end{align}
where $w_{F}\left(\sigma\right)$ is the weight of the facet $\sigma$ in $F$ and $u_{\sigma/C}$ is the primitive generator of the integer vector pointing from $C$ to $\sigma$.
For facets $\sigma \in F^{\left(1\right)}$ that contain two different vertices the two
corresponding primitive generators have opposite directions (see \cite[Lemma 1.4.4]{rau}). Thus the two respective summands in \eqref{nullteform} cancel out. The $\sigma \in F^{\left(1\right)}$ that do not contain two different vertices are unbounded polyhedra. 
We know from \cref{lemmanewdefor} that $\phi_{1|2,3}$ is bounded on all facets of $F$ which do not have interior curves like in \fref{specialexample}. But on unbounded polyhedra a rational function can only be bounded if it is constant, i.e. $\left(\phi_{1|2,3}\right)_{\sigma} \equiv 0$. So the summands in \eqref{nullteform} belonging to polyhedra which do not have interior curves like in \fref{specialexample} vanish. In fact, it holds:
\begin{align}
\label{ersteform}
\left\langle \phi_{1|2,3}  \tau_{0}\left(1\right) \prod_{k\in N}\tau_{r_{k}}\left(2\right)\right\rangle_{d}=\sum_{\substack{F^{\left(0\right)} \ni C < \sigma \in F^{\left(1\right)}:\\ \sigma \textrm{ has interior curve}\\ \textrm{like in \fref{specialexample}} }} w_{F}\left(\sigma\right) \left(\phi_{1|2,3}\right)_{\sigma}\left(u_{\sigma/C}\right).
\end{align}
Due to \lref{corollaryextrapoint} the linear parts $\left(\phi_{1|2,3}\right)_{\sigma}$ and $\left(ft\right)_{\sigma}$ coincide. Thus, it follows
\begin{align}
\label{zweiteform}
\left\langle \phi_{1|2,3}  \tau_{0}\left(1\right) \prod_{k\in N}\tau_{r_{k}}\left(2\right)\right\rangle_{d}=\sum_{\substack{F^{\left(0\right)} \ni C < \sigma \in F^{\left(1\right)}:\\ \sigma \textrm{ has interior curve}\\ \textrm{like in \fref{specialexample}} }} w_{F}\left(\sigma\right) \left(ft\right)_{\sigma}\left(u_{\sigma/C}\right).
\end{align}
Again, by \cite[Lemma 2.4]{franz} we can reformulate \eqref{zweiteform} and write (sticking to the notations from \lref{importantnewformula})
\begin{align}
\label{dritteform}
\left\langle \phi_{1|2,3}  \tau_{0}\left(1\right) \prod_{k\in N}\tau_{r_{k}}\left(2\right)\right\rangle_{d}=&\sum_{\substack{C \textrm{ looks like}\\
\textrm{ in \fref{specialexample}}}}  \left|\textrm{det}_{C}\left(ft \times ev\right)\right|.
\end{align}
Here, as well as in the next two equations, the curves $C$ over which we sum up live in the subcomplex containing the points of $X$ that map to the correct point $P_{k}$ for all $k \in N$ under the respective evaluation map and map to a fixed sufficiently large $\lambda$ under $ft$.
Now, we can divide the curves that look like in \fref{specialexample} into three classes depending along which standard direction the end $x_{4}$, which is the unbounded edge adjacent to the marked end $x_{1}$, runs. It holds:
\begin{align}
\left\langle \phi_{1|2,3}  \tau_{0}\left(1\right) \prod_{k\in N}\tau_{r_{k}}\left(2\right)\right\rangle_{d}=&\sum_{\substack{C \textrm{ looks like}\\
\textrm{ in \fref{specialexample}}}} \left|\textrm{det}_{C}\left(ft \times ev\right)\right|\notag\\
=&\sum_{\substack{C \textrm{ looks like in}\\
\textrm{\fref{specialexample} and}\\ \textrm{$x_{4}$ runs along $-e_{1}$}}}  \left|\textrm{det}_{C}\left(ft \times ev\right)\right|\notag\\
&+\sum_{\substack{C \textrm{ looks like in}\\
\textrm{\fref{specialexample} and}\\ \textrm{$x_{4}$ runs along $-e_{2}$}}}  \left|\textrm{det}_{C}\left(ft \times ev\right)\right|\notag\\
&+\sum_{\substack{C \textrm{ looks like in}\\
\textrm{\fref{specialexample} and}\\ \textrm{$x_{4}$ runs along $e_{1}+e_{2}$}}} \left|\textrm{det}_{C}\left(ft \times ev\right)\right|\notag.
\end{align}

Finally, from \lref{importantnewformula} we obtain
\begin{align}
\label{vierteform}
\left\langle \phi_{1|2,3}  \tau_{0}\left(1\right) \prod_{k\in N}\tau_{r_{k}}\left(2\right)\right\rangle_{d}=&\sum_{\substack{C \textrm{ looks like in}\\
\textrm{\fref{specialexample} and}\\ \textrm{$x_{4}$ runs along $-e_{1}$}}} \left|\textrm{det}_{C}\left( \widetilde{ev}'\right)\right|\notag\\
&+\sum_{\substack{C \textrm{ looks like in}\\
\textrm{\fref{specialexample} and}\\ \textrm{$x_{4}$ runs along $-e_{2}$}}} \left|\textrm{det}_{C}\left( \widetilde{ev}''\right)\right|\notag\\
&+\sum_{\substack{C \textrm{ looks like in}\\
\textrm{\fref{specialexample} and}\\ \textrm{$x_{4}$ runs along $e_{1}+e_{2}$}}} \left|\textrm{det}_{C}\left( \widetilde{ev}'''\right)\right|,
\end{align}
where, $\widetilde{ev}'$, $\widetilde{ev}''$ and $\widetilde{ev}'''$ arise from $ev$ by replacing the line restriction at $x_{1}$ with a point restriction given by new points $P_{1'}$, $P_{1''}$ and $P_{1'''}$ on $G_{1}$ (in the sense of \lref{importantnewformula}). The points will be chosen in a specific way as described later in the case of $P_{1'}$.
Now we observe that the sums in \eqref{vierteform} look almost like the number (see \cite[Theorem 5.3]{markrau}) $\left\langle\tau_{0}\left(2\right) \prod_{k\in N}\tau_{r_{k}}\left(2\right)\right\rangle=\sum_{C \in S} \left|\det_{C}\left( \widetilde{ev}\right)\right|$ where $S$ is the subcomplex of $X$ containing all points in $X$ that map to the correct point $P_{k}$ for $k \in N \cup\{1\}$ under the respective evaluation map. So it is natural to try to express each of the three sums via the invariant $\left\langle\tau_{0}\left(2\right) \prod_{k\in N}\tau_{r_{k}}\left(2\right)\right\rangle$. In so doing, we have to relate the index of summation of each of the sums in \eqref{vierteform} to the index of summation occurring in the sum expression of $\left\langle\tau_{0}\left(2\right) \prod_{k\in N}\tau_{r_{k}}\left(2\right)\right\rangle$.\\ 
Making this precise, we define three subcomplexes $S_{1}$, $S_{2}$ and $S_{3}$ of $X$ where $S_{1}$ contains all points in $X$ that map to the correct point $P_{k}$ for $k \in N \cup\{1'\}$ under the respective evaluation map, $S_{2}$ contains all points that map to $P_{k}$ for $k \in N \cup\{1''\}$ and $S_{3}$ contains all points that map to $P_{k}$ for $k \in N \cup\{1'''\}$. We want to show the following three equalities of (finite) sets:
 \begin{align}\{C \,|\, C \textrm{ looks like in \fref{specialexample} and $x_{4}$ runs along $-e_{1}$}\}=S_{1}\end{align}
 \begin{align}\{C \,|\, C \textrm{ looks like in \fref{specialexample} and $x_{4}$ runs along $-e_{2}$}\}=S_{2}\end{align}
 \begin{align}\{C \,|\, C \textrm{ looks like in \fref{specialexample} and $x_{4}$ runs along $e_{1}+e_{2}$}\}=S_{3}.\end{align} 
This then proves the claim, because by \eqref{vierteform} we could write \begin{align*}
\left\langle \phi_{1|2,3}  \tau_{0}\left(1\right) \prod_{k\in N}\tau_{r_{k}}\left(2\right)\right\rangle_{d}
&=3 \left\langle \tau_{0}\left(2\right) \prod_{k\in N}\tau_{r_{k}}\left(2\right)\right\rangle_{d}.
\end{align*}
So let us check the two inclusions for $S_{1}$. Subsequently, the argument can be carried out analogously for $S_{2}$ and $S_{3}$.\\
First of all, we assume that the line $G_{1}$ and the points $P_{k}$ for $k \in N \cup\{1'\}$ are organized in the following way:
\begin{itemize}
	\item the $x$- and $y$-coordinates of the root of the line $G_{1}$ and of the points $P_{k}$ for $k \in N$ are in an open interval $\left(-\epsilon, \epsilon\right)$ for some small real number $\epsilon>0$;
  \item the point $P_{1'}$ lies on the end of the line $G_{1}$ which points into the direction $-e_{1}$ and its $x$-coordinate is $-\lambda$ where $\lambda \gg 0$ is some large real number;
	\item (additionally) the point $P_{2}$ lies at the origin and the $x$-coordinate of $P_{3}$ lies in the interval $\left[0,\epsilon\right]$.
\end{itemize}
This means we assume that the point $P_{1'}$ lies "far away" from the other points $P_{k}$ for $k\in N$ (see \fref{1:3}). Nevertheless, we still assume that all the points are in general position. This is possible because the points in general position are dense in the set of all points (cf. Section \ref{definitions}).
Note that we are now in the situation of \cite[Lemma 2.2.1]{torchiani}.
As a consequence of this lemma we get the two desired inclusions.\\
"$\subseteq$": Let $C$ be a curve that looks like in \fref{specialexample} and its unbounded edge $x_{4}$ runs along $-e_{1}$. Particularly, the edge $x_{4}$ runs along the end of $G_{1}$ pointing towards $-e_{1}$ and it is incident to a unique vertex. This vertex has an $x$-coordinate greater than $-\lambda$, because $C$ maps to $\lambda$ under $ft$ by assumption and the points $P_{2}$ and $P_{3}$ have an $x$-coordinate in the interval $\left[0,\epsilon\right]$.
Furthermore, the point $P_{1'}$ lies on the end of $G_{1}$ pointing towards $-e_{1}$ and it is placed at an $x$-coordinate of $-\lambda$ by construction. Thus, the unbounded edge $x_{4}$ of $C$ has to meet $P_{1'}$. All in all, the curve $C$ maps to $P_{1'}$ and $P_{k}$ for $k \in N$ under the respective evaluation maps. So $C$ is in $S_{1}$.\\
"$\supseteq$": Let $C \in S_{1}$. We want to show that $C$ must look like in \fref{1:3}, i.e. especially like in \fref{specialexample}.
\begin{figure}[h]
  \centering
		\def\svgwidth{360pt}
  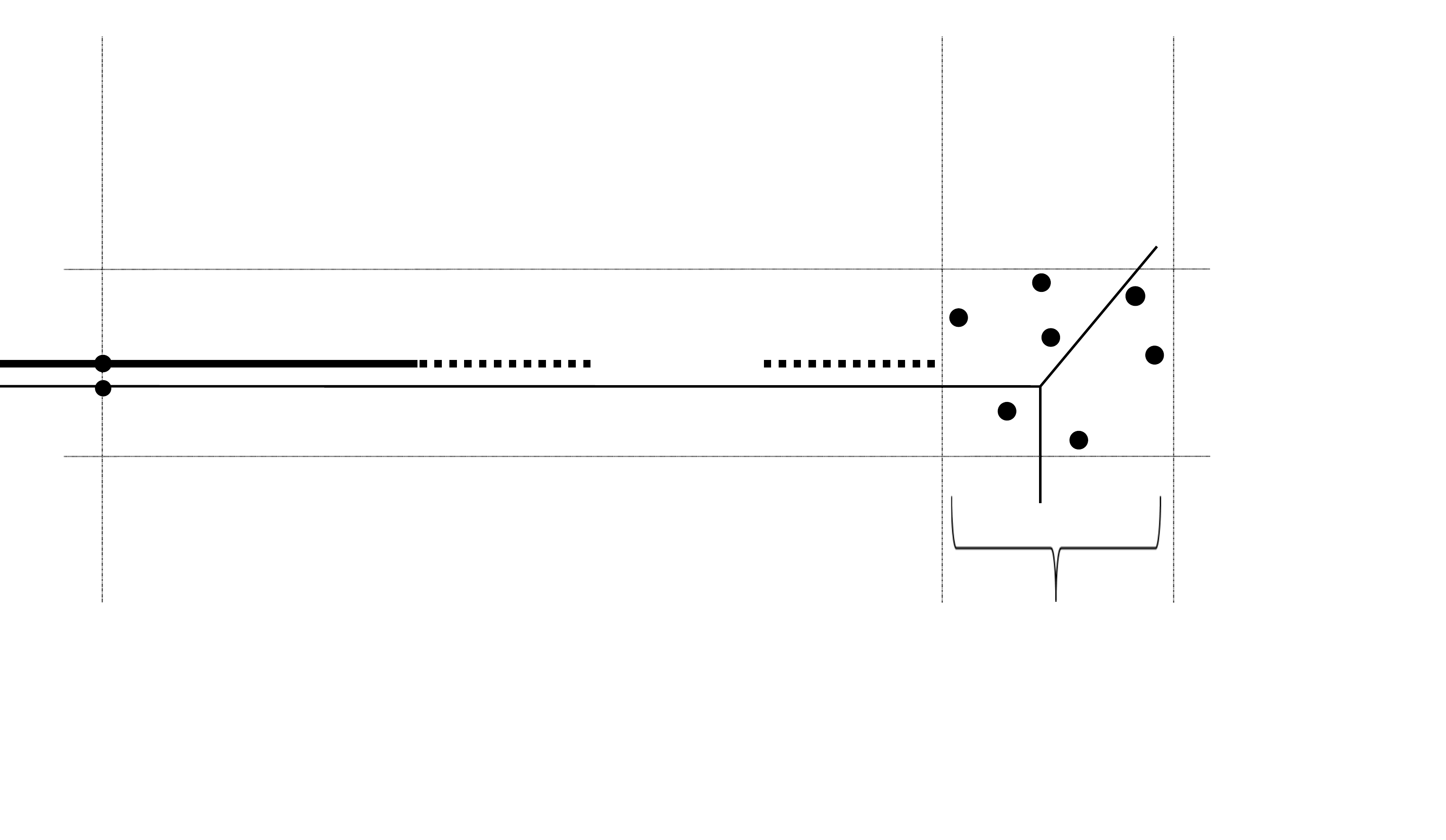
  \caption{Curve $C$ in $\left(\tau_{0}\left(2\right) \prod_{k\in N}\tau_{r_{k}}\left(2\right)\right)$ for special point configuration}
  \label{1:3}
\end{figure}
Therefore, assume that $C$ does not look like in \fref{1:3}. That means we assume that $x_{1}$ is not adjacent to an unbounded edge. By the balancing condition one of its adjacent bounded edges must be pointing leftwards and as this edge is bounded it is adjacent to a vertex with an $x$-coordinate smaller than $-\lambda$. But such a vertex does not exist, because all vertices of $C$ must lie in $R:=\{\left(x,y\right)\in \mathbb{R}^{2}| -\lambda \leq x\leq \epsilon, -\epsilon \leq y\leq \epsilon\}$ by \cite[Lemma 2.2.1]{torchiani}. So this case cannot occur. Hence, the curve $C \in S_{1}$ looks like in \fref{1:3} and the unbounded edge $x_{4}$, which is the unbounded edge adjacent to $x_{1}$, runs along $-e_{1}$.
\end{proof}

By \tref{majortheorem} we are able to compute all tropical gravitational descendants of the form $\left\langle\tau_{1}\left(1\right) \prod_{k\in N}\tau_{r_{k}}\left(2\right)\right\rangle_{d}$. Using in addition the Tropical String Equation \cite[Theorem 3.12]{raunew}, the Tropical Divisor Equation \cite[Theorem 3.17]{raunew} and the Tropical Dilaton Equation \cite[Theorem 3.13]{raunew} we can recursively compute all tropical gravitational descendants of the form 
\begin{align}
\left\langle\tau_{0}\left(0\right)^{l'}\tau_{1}\left(0\right)^{l''} \tau_{0}\left(1\right)^{m-1}\tau_{1}\left(1\right)\prod_{k\in N}\tau_{r_{k}}\left(2\right)\right\rangle_{d}.
\end{align}

\begin{remark}
\label{remarkfirst}
This means we are in particular able to compute the tropical counterparts of the so-called first descendant invariants with exactly one tangency condition $\tau_{1}\left(1\right)$. First descendant invariants are gravitational descendants which involve only psi-classes of power at most one. They play an important role in many classical enumerative questions because they are sufficient to study characteristic number problems (cf. \cite{grabkopand}).
\end{remark}

To close this article and to see the new recursion formula in action, we want to compute the invariant $\left\langle \tau_{1}\left(1\right)\tau_{1}\left(2\right)\tau_{1}\left(2\right)\right\rangle_{2}$. In fact, it holds:

\begin{align}
\label{doublecheck} \left\langle \tau_{1}\left(1\right)\tau_{1}\left(2\right)\tau_{1}\left(2\right)\right\rangle_{2}
=_{\ref{majortheorem}}& \left( \sum \left\langle \tau_{0}\left(\varepsilon\right) \tau_{0}\left(1\right)\right\rangle_{d_{1}} \cdot\left\langle \tau_{0}\left(\zeta\right) \tau_{1}\left(2\right) \tau_{1}\left(2\right)\right\rangle_{d_{2}}\right.\notag\\
&\left.+3\underbrace{\left\langle\tau_{0}\left(2\right)\tau_{1}\left(2\right)\tau_{1}\left(2\right)\right\rangle_{2}}
_{=_{\cite{markrau}} 1}  \right)\notag\\
=& \underbrace{\left\langle \tau_{0}\left(0\right) \tau_{0}\left(1\right)\right\rangle_{0} \cdot\left\langle \tau_{0}\left(2\right) \tau_{1}\left(2\right) \tau_{1}\left(2\right)\right\rangle_{2}}_{=0} + 3 \notag \\
=& 3.
\end{align}

The numerical value of the algebro-geometric version $\left\langle\tau_{1}\left(1\right)\tau_{1}\left(2\right)\tau_{1}\left(2\right)\right\rangle^{alg}_{2}$ of this invariant equals 0 and can be easily computed via standard techniques (e.g. see \cite{kock}). So this is an example where the classical algebro-geometric and the tropical number do not coincide as mentioned in the introduction.

\noindent {Falko Gauss, Lehrstuhl für Mathematik VI, Institut für Mathematik, Universität Mannheim,
 A5, 6, 68131, Mannheim, Germany}\\
\emph{E-mail address:} gauss@math.uni-mannheim.de

\end{document}